\documentclass[10pt,a4paper]{article}
\usepackage{fullpage}
\usepackage{amsfonts,amsmath,amssymb}
\usepackage{amsthm}
\usepackage{graphicx}\usepackage{sectsty}

\theoremstyle{plain}
\newtheorem{theorem}{Theorem}[section]
\newtheorem{lemma}[theorem]{Lemma}

\theoremstyle{definition}
\newtheorem{definition}[theorem]{Definition}
\theoremstyle{remark}

\numberwithin{equation}{section}

\sectionfont{\large}
\newenvironment{acknowledgement}[1][Acknowledgement
]{\begin{trivlist} \item[\hskip \labelsep {\bfseries
#1}]}{\end{trivlist}}

\begin{document}
\title{A Near-Field Basis in Radially Symmetric Interior Transmission Problem}
\author{Lung-Hui Chen$^1$}\maketitle\footnotetext[1]{Department of
Mathematics, National Chung Cheng University, 168 University Rd.
Min-Hsiung, Chia-Yi County 621, Taiwan. Email:
mr.lunghuichen@gmail.com;\,lhchen@math.ccu.edu.tw. Fax:
886-5-2720497.}
\begin{abstract}
The spectrum of interior transmission problem is the zero set of certain entire functional determinant. It is classic that we deploy the series of exponential polynomials to approximate the distribution of the roots of the entire functions of exponential type.  
We construct an exponential system in the form of $\{e^{ik_jr}\}$ according to the set of interior transmission eigenvalues $\{k_{j}\}$. The eigenvalues are the zeros of a sine-type function. In particular, they are intersection points of two asymptotically periodic entire functions. The intersection set is asymptotically sine-like near the real axis, so we may manage to construct a basis according to the class of spectral objects. Due to the result of Paley-Wiener theorem, the zero set generates a natural duality in the form of Fourier transform associated with exponential polynomials. Whenever there is a sufficient quantity of transmission eigenvalues, we are given a series of exponential polynomials to saturate the functional density, which completes a $L^{2}$-Riesz basis in a suitable ball. 
\\MSC: 35P25/35R30/34B24.
\\Keywords:  interior transmission eigenvalue/completeness of exponential polynomial/functional analysis.
\end{abstract}
\section{Preliminaries and Main Result}
In this paper, we study a basis theory of the following eigenvalue problem:
\begin{eqnarray}\label{1.1}
\left\{%
\begin{array}{ll}
    \Delta w+k^2n(x)w=0,  & \hbox{ in }D; \vspace{3pt}\\\vspace{3pt}
    \Delta v+k^2v=0, & \hbox{ in }D; \\\vspace{3pt}
    w=v, & \hbox{ on }\partial D; \\\vspace{3pt}
    \frac{\partial w}{\partial \nu}=\frac{\partial v}{\partial \nu},& \hbox{ on }\partial D,\\
\end{array}%
\right.
\end{eqnarray}
where $k\in\mathbb{C}$;
 $\nu$ is the unit outer normal;
   $D$ is the unit ball in $\mathbb{R}^3$;
   $n(x)=n(|x|)\in\mathcal{C}^2(\mathbb{R}^+)$;
   $n(x)>0$ for all $x\in D$;
   $n(x)=1$, for $x\notin D$. We set $|x|:=r\in\mathbb{R}^+$ and $\hat{x}:=(\theta,\varphi)\in\mathbb{S}^2$ as the spherical coordinate. Here we consider a radially symmetric perturbation.
The equation~(\ref{1.1}) is the homogeneous interior transmission eigenvalue problem. We say $k\in\mathbb{C}$ is an interior transmission eigenvalue of~(\ref{1.1}) if there is a non-trivial solution pair $w,v\in\mathcal{C}^2(D)\cap\mathcal{C}^1(\bar{D})$ to the boundary value problem.

\par
The equation in the form~(\ref{1.1}) was first introduced by Colton and Monk \cite{Colton4,Colton} and Kirsch \cite{Kirsch86}, in which some denseness properties of the far-field patterns are examined. For a comprehensive study of the problem, we refer to \cite{Aktosun,Cakoni,Cakoni2,Colton3,Colton2,Colton5,L,La,Liu,Mc,Rynne}.
We also refer certain Weyl's types of asymptotics to \cite{Chen,Chen3,Chen5,Colton5,L,La}.
The eigenvalue problem occurs when the plane waves are perturbed by the inhomogeneity specified by the index of refraction $n(x)$.  The inverse problem is to determine the index of refraction by the measurements of the scattered waves in the far-fields. The  problem plays a role in various disciplines of science and technology such as sonar and radar, geophysical sciences, medical imaging, remote sensing, and non-destructive testing in device manufacturing.
Many inverse uniqueness and existence problems can be reduced to a problem in the form of~(\ref{1.1}) \cite{Colton,Kirsch86,K,Liu}. 
The interior transmission eigenvalues also play a role in the numerical computation and theoretical analysis in the inverse
scattering theory. A Riesz basis provides a numerical scheme to analyze the convergence and the stability of approximating series such as those in the finite element method \cite{Colton8,Sun2}, optimization methods, and Fourier expansion method \cite{K}. Theoretically, we can expand the index of refraction uniquely in the series of certain exponential polynomials connecting to its own eigenvalues.
\par
 Let $\{k_j\}$
be the collection of interior transmission eigenvalues. For the numerical and functional analysis of the inverse scattering problem in acoustic and electromagnetic waves, we study the completeness and minimality of the exponential system $\{e^{ik_jr}\}$ from the point of view of \cite{Levin,Levin2}. In this paper, we will show that if there is a sufficient quantity of zeros of $$D_{0}(k):=\frac{\sin{k}}{k}y'(1;k)-\cos{k}\,y(1;k),$$
 then we have a $L^2$-basis in the form of exponential polynomials $\{e^{ik_jr}\}$ in the region that contains the perturbation, which would be the only near-field basis in the interior transmission problem known to the author.

\par
Let us expand the solution $(v,w)$ of~(\ref{1.1}) in two series of spherical harmonics,  \cite[p.\,109]{Colton4} and \cite[p.\,227]{Colton2}:
\begin{eqnarray}\label{1.3}
&&v(x;k)=\sum_{l=0}^{\infty}\sum_{m=-l}^{m=l}a_{l,m}j_l(k r)Y_l^m(\hat{x});\\
&&w(x;k)=\frac{1}{r}\sum_{l=0}^{\infty}\sum_{m=-l}^{m=l}b_{l,m}y_l(r)Y_l^m(\hat{x}), \label{1.4}
\end{eqnarray}
where $r:=|x|$; $\hat{x}=(\theta,\varphi)\in\mathbb{S}^2$;
$j_l$ is the spherical Bessel function of the first kind of order $l$. The summations converge uniformly and absolutely on the compact subsets of $|x|=r\geq1$.

\par
Thus, we deduce the following homogeneous system from the boundary condition of~(\ref{1.1}): $-l\leq m\leq l$, $l=0,1,2,\ldots$,
\begin{eqnarray}\vspace{5pt}\nonumber
\left\{
  \begin{array}{ll}
    &a_{l,m}j_l(k r)|_{r=1}-b_{l,m}\frac{y_l(r)}{r}|_{r=1}=0; \label{1.6}\vspace{10pt}\\
    &a_{l,m}[j_l(k r)]'|_{r=1}-b_{l,m}[\frac{y_l(r)}{r}]'|_{r=1}=0.
  \end{array}
\right.
\end{eqnarray}

\par
Let us define
\begin{equation}\label{D}\nonumber
D_{l}(k):=\det\left(%
\begin{array}{cc}
  j_l(k r)|_{r=1}  & -\frac{y_l(r)}{r}|_{r=1}\vspace{6pt}\\
  \{j_l(k r)\}'|_{r=1}& -\{\frac{y_l(r)}{r}\}'|_{r=1}\\
\end{array}%
\right),
\end{equation}
and the existence of each nontrivial $(a_{l,m},b_{l,m})$ of~(\ref{1.6}) is equivalent to finding the zeros of
\begin{equation}\nonumber
D_{l}(k)=0.
\end{equation}
We refer the details to \cite{Aktosun,Chen,Chen3,Colton3,Colton2,Colton5}.
For $l=0$,
\begin{equation}\label{1.7}
D_{0}(k)=\frac{\sin{k}}{k}y'(1;k)-\cos{k}\,y(1;k),
\end{equation}
in which
we consider the Liouville transformation of
$y(r)=y(r;k)$:
\begin{eqnarray}\label{}
&z(\xi):=[n(r)]^{\frac{1}{4}}y(r),\mbox{ where
}\xi:=\int_0^r[n(\rho)]^{\frac{1}{2}}d\rho.
\end{eqnarray}
In particular, we define
\begin{eqnarray}
&&B(r):=\int_0^r[n(\rho)]^{\frac{1}{2}}d\rho,\,0\leq r\leq1.\\
&&B(1):=B.\nonumber
\end{eqnarray}
Then we describe the following Sturm-Liouville theory.
\begin{eqnarray}
\left\{%
\begin{array}{ll}
    z''+[k^2-p(\xi)]z=0,\,0<\xi<B; \vspace{5pt}\\
    z(0)=0;\,z'(0)=[n(0)]^{-\frac{1}{4}}, \nonumber
\end{array}%
\right.
\end{eqnarray}
where
\begin{equation}
p(\xi)=\frac{n''(r)}{4[n(r)]^2}-\frac{5}{16}\frac{[n'(r)]^2}{[n(r)]^3}.\nonumber
\end{equation}
Hence \cite[(2.2)]{Chen5},
\begin{equation}\nonumber
p(\xi)\not\equiv0.
\end{equation}
\par
With  \cite[p.16]{Po}, we need the following asymptotics:
\begin{equation}\label{1.13}
z(\xi;k)=\frac{\sin k\xi}{k}-\frac{\cos
k\xi}{2k^2}Q(\xi)+\frac{\sin
k\xi}{4k^3}[p(\xi)+p(0)-\frac{1}{2}Q^2(\xi)]+O(\frac{\exp[|\Im
k|\xi]}{k^4}),
\end{equation}
where $Q(\xi)=\int_0^\xi p(\xi)d\xi$;
\begin{equation}\label{1.14}
z'(\xi;k)= \cos k\xi+\frac{\sin k\xi}{2k}Q(\xi)+\frac{\cos
k\xi}{4k^2}[p(\xi)-p(0)-\frac{1}{2}Q^2(\xi)]+O(\frac{\exp[|\Im
k|\xi]}{k^3}).
\end{equation}
Here, we note that the boundary condition in~(\ref{}) is
$z'(0)=[n(0)]^{-\frac{1}{4}}$, so we need a multiple
$[n(0)]^{\frac{1}{4}}$ before $z(\xi;k)$ and $z'(\xi;k)$ in the
asymptotics above. Thus,
\begin{eqnarray}\nonumber
D_0(k)&=&\frac{\sin k}{k}y'(1;k)- (\cos
k)y(1;k)\vspace{5pt}\\
&=&\frac{\sin(1-B)k}{[n(0)]^{\frac{1}{4}}k}+\frac{\sin(1-B)k}{2[n(0)]^{\frac{1}{4}}k}[O_1(k)+O_2(k)]
+\frac{\sin(1+B)k}{2[n(0)]^{\frac{1}{4}}k}[O_2(k)-O_1(k)],\label{1.15}
\end{eqnarray}
where
\begin{equation}\label{116}
O_1(k):=1-\frac{\cot
kB}{2k}Q(B)+\frac{1}{4k^2}[p(B)+p(0)-\frac{1}{2}Q^2(B)]+O(\frac{1}{k^3}),
\end{equation}
which is bounded for $\Im k\neq0$. Similarly,
\begin{equation}\label{117}
O_2(k):=1+\frac{\tan
kB}{2k}Q(B)+\frac{1}{4k^2}[p(B)-p(0)-\frac{1}{2}Q^2(B)]+O(\frac{1}{k^3}).
\end{equation}
Therefore,
\begin{eqnarray}\label{119}
ik^3D_0(k)=k^2[e^{i(1-B)k}-e^{-i(1-B)k}][1+O(\frac{1}{k})]-p(0)[e^{i(1+B)k}-e^{-i(1+B)k}][1+O(\frac{1}{k})],
\end{eqnarray}
in which $\sin kB\neq0$, and $\cos kB\neq0$ due to the poles in~(\ref{116}) and~(\ref{117}). In addition, the asymptotics holds suitably away from the real axis. Accordingly, we can apply Rouch\'{e}'s theorem in complex analysis to conclude that $D_0(k)$ has the same number of zeros and poles as $k^2[e^{i(1-B)k}-e^{-i(1-B)k}]-p(0)[e^{i(1+B)k}-e^{-i(1+B)k}]$ in a suitable strips that contain the real axis but away from the origin. In particular, the Wilder's type of theorem can be applied to study the distribution of the zeros of $k^2[e^{i(1-B)k}-e^{-i(1-B)k}]-p(0)[e^{i(1+B)k}-e^{-i(1+B)k}]$. More details can be found in \cite{Chen,Chen3,Chen5}. 

\par
To discuss the distributional rules of the zeros of $D_{0}(k)$, we introduce a definition as follows \cite{Levin,Levin2}.
\begin{definition}
Let $f(z)$ be an integral function of order $\rho$, and
$n(f,\alpha,\beta,r)$ denotes the number of the zeros of $f(z)$
inside the angle $[\alpha,\beta]$ and $|z|\leq r$. We define the
density function as
\begin{equation}\nonumber
\Delta_f(\alpha,\beta):=\lim_{r\rightarrow\infty}\frac{n(f,\alpha,\beta,r)}{r^{\rho}},
\end{equation}
and
\begin{equation}\nonumber
\Delta_f(\beta):=\Delta_f(\alpha_0,\beta),
\end{equation}
with some fixed $\alpha_0\notin E$ such that $E$ is at most a
countable set.
\end{definition}
Referring to the previous results \cite{Chen,Chen3,Chen5}, we have
\begin{theorem}\label{11}
The determinant $D_0(k)$ is an entire function of order $1$ and of
type $1+B$. In particular, the angular eigenvalue densities in $\mathbb{C}$ are equal to
\begin{eqnarray}
&&\Delta_{D_0}(-\epsilon,\epsilon)=\Delta_{D_0}(\pi-\epsilon,\pi+\epsilon)=\frac{1+B}{\pi};\label{113}\\
&&\Delta_{D_0}(\epsilon,\pi-\epsilon)=\Delta_{D_0}(\pi+\epsilon,2\pi-\epsilon)=0.\label{114}
\end{eqnarray}
\end{theorem}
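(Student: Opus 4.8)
The plan is to read off the order, type, and angular zero density of $D_0(k)$ directly from the exponential-sum representation already assembled in~(\ref{119}). Set
\[
G(k):=k^2\bigl[e^{i(1-B)k}-e^{-i(1-B)k}\bigr]-p(0)\bigl[e^{i(1+B)k}-e^{-i(1+B)k}\bigr],
\]
so that~(\ref{119}) presents $ik^3D_0(k)$ as $G(k)$ with each of its two exponential pairs multiplied by a factor $1+O(1/k)$. Since $G$ is a finite sum of terms $P(k)e^{\lambda k}$ with polynomial coefficients $P$ and purely imaginary frequencies $\lambda\in\{\pm i(1-B),\,\pm i(1+B)\}$, it is of order $1$, and neither multiplication by $k^{-3}$ nor by the bounded factors $1+O(1/k)$ alters order or type; hence $D_0$ has order $1$. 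To pin down the type I would compute the indicator $h(\theta):=\limsup_{r\to\infty}r^{-1}\log|D_0(re^{i\theta})|$. Along any ray the exponential $e^{(1+B)r|\sin\theta|}$ beats the polynomial factor standing in front of $e^{\pm i(1-B)k}$, so the pair $e^{\pm i(1+B)k}$ dominates and $h(\theta)=(1+B)|\sin\theta|$. Its maximum $1+B$, attained at $\theta=\pm\pi/2$, is the type, which establishes the first assertion.

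For the angular densities I would exploit that the indicator diagram of $D_0$ — the convex set whose support function is $h$ — is the degenerate vertical segment $[-i(1+B),\,i(1+B)]$. For an entire function of completely regular growth of order $1$, Levin's density formula gives $\Delta_{D_0}(\alpha,\beta)=\tfrac{1}{2\pi}\bigl[h'(\beta)-h'(\alpha)\bigr]+\tfrac{1}{2\pi}\int_\alpha^\beta h(\theta)\,d\theta$. On $(\epsilon,\pi-\epsilon)$ and $(\pi+\epsilon,2\pi-\epsilon)$ the boundary term $h'(\pi-\epsilon)-h'(\epsilon)\to-2(1+B)$ is exactly cancelled by $\int h\,d\theta\to 2(1+B)$, giving density $0$, which is~(\ref{114}). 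At the two corners $\theta=0$ and $\theta=\pi$ the derivative $h'$ jumps by $2(1+B)$ while the integral contribution vanishes as $\epsilon\to0$, producing $\Delta_{D_0}(-\epsilon,\epsilon)=\Delta_{D_0}(\pi-\epsilon,\pi+\epsilon)=\tfrac{1}{2\pi}\cdot 2(1+B)=\tfrac{1+B}{\pi}$, which is~(\ref{113}).

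To render this rigorous I would first pass from $G$ to $D_0$ by Rouch\'e's theorem, exactly as indicated after~(\ref{119}): in horizontal strips straddling the real axis but avoiding the origin and the zeros of $\sin kB$ and $\cos kB$, the functions $ik^3D_0$ and $G$ carry the same zeros, so their densities coincide. I would then invoke the Wilder--P\'olya--Langer theory of zeros of exponential sums with polynomial coefficients — the Wilder-type theorem referenced in the text and developed in \cite{Chen,Chen3,Chen5} — to locate the zeros of $G$. Because the outer pair $e^{\pm i(1-B)k}$ carries the polynomial coefficient $k^2$ while $e^{\pm i(1+B)k}$ carries the constant $p(0)$, the balance $|k^2\sin(1-B)k|\sim|p(0)\sin(1+B)k|$ is achieved not on the real axis alone but also along the logarithmic curves $\Im k\sim\pm B^{-1}\log|\Re k|$; since these curves enter every cone $|\arg k|<\epsilon$ and $|\arg k-\pi|<\epsilon$ for large $|k|$, all zeros are captured inside~(\ref{113}) and only $o(r)$ of them stray into the sectors of~(\ref{114}).

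The hard part will be precisely this last step: verifying completely regular growth and controlling the zeros uniformly inside the thin transition layer around the real axis, where the polynomial and constant coefficients compete. The naive leading term $k^2\sin(1-B)k$ would by itself suggest the wrong real-axis density $(1-B)/\pi$; the correct value $(1+B)/\pi$ emerges only after accounting for the logarithmically displaced zeros, so the counting must be done with the uniform asymptotics~(\ref{1.13})--(\ref{1.14}) rather than term by term. It is here that the separation of the frequencies $1\pm B$ (so that $B\neq1$) and the nonvanishing of the dominant coefficient $p(0)$ enter, both being needed for the type to equal $1+B$ and for the corner masses to have the stated weight.
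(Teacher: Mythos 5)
Your proposal is correct in substance and, at its rigorous core, follows the same route as the paper: the paper likewise starts from the representation~(\ref{119}), passes to the exponential sum by Rouch\'{e}'s theorem, and then invokes a Wilder/Dickson-type theorem on zeros of exponential sums --- indeed the paper offers no self-contained proof of this statement at all, deferring to \cite{Chen,Chen3,Chen5}, and the three zero families you describe (a near-real family of density $|1-B|/\pi$ plus two logarithmically displaced families of density $B/\pi$ each when $B\leq 1$, resp.\ $1/\pi$ each when $B>1$) are exactly the strip counts~(\ref{2.2})--(\ref{2.4}) recorded in the unnumbered theorem inside the proof of Lemma~\ref{224}. What you do differently is to extract~(\ref{113})--(\ref{114}) from Levin's completely-regular-growth density formula applied to the indicator $h(\theta)=(1+B)|\sin\theta|$, rather than by summing the strip counts; this parallels the paper's proof of Lemma~\ref{221}, which computes the same indicator by factoring $D_0$ and citing Cartwright theory, and it buys a clean conceptual account of why all the density sits in the two corners $\theta=0,\pi$. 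It does not, however, save work: verifying complete regular growth of $D_0$ is essentially equivalent to the strip analysis you yourself defer to Rouch\'{e} plus Wilder, so the formula is a repackaging of the counting, not a substitute for it.

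Three smaller points. First, your insistence that $p(0)\neq0$ is needed for the type (and the corner mass) to equal $1+B$ identifies a genuine hypothesis that the paper leaves implicit --- the paper only records $p\not\equiv0$, and if $p(0)=0$ the coefficient of the $(1+B)$-frequencies in~(\ref{119}) degenerates and the whole argument must be redone at higher order; flagging this is to your credit. Second, $B\neq1$ is not actually needed: when $B=1$ the middle family is empty and the two logarithmic families still carry density $1/\pi$ each, which is why the paper's strip-counting theorem treats the case $1\geq B$ uniformly. Third, your logarithmic curves $\Im k\sim\pm B^{-1}\log|\Re k|$ are the picture consistent with~(\ref{119}) (the polynomial weight $k^2$ against the constant $p(0)$ forces the balance off any horizontal strip), and for the angular densities this is harmless since those curves have argument tending to $0$ and $\pi$; note, though, that it sits in tension with the horizontal-strip localization asserted in Lemma~\ref{222}, a discrepancy in the paper rather than in your argument.
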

The theorem implies the following result.
\begin{theorem}\label{13}
Let $\{k_j\}$ be the collection of all the interior transmission eigenvalues of~(\ref{1.1}).
The exponential system $\{e^{ik_j r}\}$ is a Riesz basis in $L^2(U)$, where $U$ is a closed ball centered at $0$ with radius $1+B$.
\end{theorem}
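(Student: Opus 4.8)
The plan is to read Theorem~\ref{13} as the assertion that $\{k_j\}$ is a complete interpolating sequence for the Paley--Wiener space $PW_{1+B}$, and to obtain it from the generating--function theory of Riesz bases of exponentials \cite{Levin,Levin2}. Since $n$ is radial and the Wronskian-type determinant $D_0$ is even in $k$, the eigenvalues occur in symmetric pairs $\pm k_j$, and the full zero set of $D_0$ is itself symmetric about the origin. Passing to the radial coordinate $r=|x|$ identifies the relevant part of $L^2(U)$ with an $L^2$ space over the symmetric interval $[-(1+B),1+B]$ whose half-length equals the exponential type of $D_0$; by the Paley--Wiener theorem this space is isometric to $PW_{1+B}$, and under the induced duality the Riesz-basis property of $\{e^{ik_jr}\}$ is exactly the statement that $\{k_j\}$ is simultaneously interpolating and spanning for $PW_{1+B}$. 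This is the equivalent formulation I would establish.

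First I would verify the hypotheses of Pavlov's criterion for a complete interpolating sequence, using Theorem~\ref{11} as the analytic input. The order $1$ and type exactly $1+B$ fix the indicator diagram as the vertical segment $[-i(1+B),i(1+B)]$, which is the type required of a $PW_{1+B}$ generating function. The angular densities \eqref{113}--\eqref{114} show that the zeros cluster in arbitrarily thin sectors about the positive and negative real axes and are absent elsewhere, so $\{k_j\}$ lies in a horizontal strip $|\Im k|\le C$, and their combined linear density along $\mathbb{R}$ equals the critical value $\tfrac{1+B}{\pi}$ demanded of a complete interpolating sequence for $PW_{1+B}$. Uniform separation of the $k_j$ follows from the asymptotically sine-like spacing dictated by the quasi-polynomial on the right of \eqref{119}. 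Finally, along a fixed horizontal line $\Im k=h$ the extreme-frequency term $p(0)e^{\mp i(1+B)k}$ in \eqref{119} dominates, so $|k^3 D_0(k)|$ is bounded above and below there; hence $|D_0(x+ih)|^2$ is, up to the harmless factor $|x+ih|^{-6}$, bounded above and below and therefore lies in the Muckenhoupt class $(A_2)$. This is the last hypothesis of Pavlov's theorem, and together with Levin's sine-type theorem in its idealized form it yields the Riesz basis.

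The main obstacle is that $D_0$ is not literally a function of sine type: \eqref{119} superposes the two exponential types $1\pm B$ with polynomial coefficients ($k^2$ against the constant $p(0)$), and a genuine polynomial factor $k^{-3}$ is present, so the behaviour near the origin and the possible interleaving of the two combs of zeros must be controlled before the clean interpolation theory applies. I expect to dispose of this in two steps. The polynomial factor is inert, because multiplying the generating function by a fixed power of $k$ alters the weight $|D_0(x+ih)|^2$ only by $|x+ih|^{\pm 6}$, which preserves the $(A_2)$ class; equivalently it corresponds to a finite shift in the indexing of the eigenvalues. The finitely many exceptional small or clustered zeros are then handled by Bari's stability theorem: a complete interpolating sequence remains one after deletion and adjunction of a finite set, so deleting the corresponding exponentials and adjoining a matching finite family leaves a Riesz basis. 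Completeness is automatic once the interpolating property and the critical density are secured, since a uniformly discrete sequence of density $\tfrac{1+B}{\pi}$ that interpolates $PW_{1+B}$ admits no proper interpolating extension. Transporting the result back through the Paley--Wiener isometry gives the $L^2(U)$ Riesz basis of Theorem~\ref{13}; throughout, the delicate point is the uniform, quantitative control of the deviation of \eqref{119} from the idealized sine-type profile along the working line $\Im k=h$, together with the verification that the radial measure on $U$ matches the Lebesgue structure used in the Paley--Wiener duality.
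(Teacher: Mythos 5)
Your route---Pavlov's $(A_2)$ criterion for complete interpolating sequences plus Bari's finite-stability theorem---differs from the paper's, which instead verifies Levin's four sine-type conditions (Lemmas~\ref{221}--\ref{224}) and then quotes \cite[p.\,170]{Levin2}; in principle such a route could work, but your verification fails at its decisive analytic steps. First, your reading of~\eqref{119} on a fixed horizontal line is backwards. For $\Im k=h$ fixed and $|x|\to\infty$, the term $k^2[e^{i(1-B)k}-e^{-i(1-B)k}]$ has modulus $\asymp |k|^2e^{(1-B)h}$, while the extreme-frequency term $p(0)[e^{i(1+B)k}-e^{-i(1+B)k}]$ stays $\asymp e^{(1+B)h}$; the former wins as soon as $|k|\gg e^{Bh}$, so the extreme-frequency term dominates only in the regime $|\Im k|\gtrsim \frac{1}{B}\log|k|$, never along an entire horizontal line. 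Consequently $|k^3D_0(x+ih)|\asymp |x|^2e^{(1-B)h}$ is unbounded and $|D_0(x+ih)|\asymp e^{(1-B)h}/|x|\to0$: neither is bounded above and below, so the weight you feed into Pavlov's theorem is not the one you claim. (The correctly normalized generating function is $kD_0(k)$, which is essentially what the paper's Lemma~\ref{223} is after.) Second, even granting boundedness up to a power factor, the assertion that $|x+ih|^{\mp6}$ is ``harmless'' for Muckenhoupt is false: on $\mathbb{R}$ one has $|x|^{\alpha}\in (A_2)$ if and only if $-1<\alpha<1$, so a factor $|x|^{\pm6}$ destroys the $(A_2)$ class. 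Polynomial factors amount to adding or deleting zeros; they change the weight class and the basis property, and can only be compensated zero by zero, not discarded wholesale.

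The remaining hypotheses are also not actually obtained. From the angular densities~\eqref{113}--\eqref{114} you infer that $\{k_j\}$ lies in a horizontal strip, but that inference is a non sequitur: a sequence such as $k_j=j+i\log j$ lies in every sector $|\arg k|<\epsilon$ yet has unbounded imaginary parts. Indeed, since~\eqref{119} is an exponential sum with polynomial coefficients, it has a family of zeros along the logarithmic curves where $|k|^2e^{(1-B)|\Im k|}$ balances $|p(0)|e^{(1+B)|\Im k|}$, i.e.\ $|\Im k|\approx\frac{1}{B}\log|k|$; this is exactly why the strip condition needs a separate argument, which the paper supplies as Lemma~\ref{222} by citing \cite[Theorem 4.2]{Chen5} rather than by deducing it from Theorem~\ref{11}. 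Likewise, uniform separation is asserted from ``asymptotically sine-like spacing,'' but the zero set is a union of interlacing families with different asymptotic spacings ($\approx\pi/(1-B)$ near the real axis versus $\approx\pi/B$ in the off-axis families), and separation of the union is precisely the issue to which the paper devotes Lemma~\ref{224}. Since Pavlov's theorem (like the sine-type theorem) requires the strip, the separation, and the $(A_2)$ hypotheses simultaneously, these gaps are fatal to the proposal as written; what survives is the correct identification of the critical density $(1+B)/\pi$ and of the indicator diagram $[-(1+B)i,(1+B)i]$, i.e.\ the parts already contained in Theorem~\ref{11} and Lemma~\ref{221}.
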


\section{A Proof of Theorem \ref{13}}
Now we show that $D_0(k)$ is actually a sine-type function as in \cite[p.\,163]{Levin2}, and verify the four conditions of this class of functions.
\begin{lemma}\label{221}
The indicator diagram is $[-(1+B)i,(1+B)i]$.
\end{lemma}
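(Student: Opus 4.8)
The plan is to compute the Phragm\'en--Lindel\"of indicator
$$h_{D_0}(\theta):=\limsup_{r\to\infty}\frac{\log|D_0(re^{i\theta})|}{r}$$
and then recover the indicator diagram $\bar I$ as the convex compact set whose support function is $h_{D_0}$, via the conjugate-indicator-diagram correspondence of \cite{Levin2}. By Theorem~\ref{11}, $D_0$ is entire of exponential type, so $\bar I$ is well defined; and since multiplying by the polynomial $ik^3$ leaves the indicator unchanged (the factor contributes $\log|k^3|/|k|\to0$), I may work directly with the asymptotic expression for $ik^3D_0(k)$ in~(\ref{119}).

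First I would isolate the four exponential modes in~(\ref{119}): the pair $e^{\pm i(1+B)k}$ carried by the constant $-p(0)$, and the pair $e^{\pm i(1-B)k}$ carried by $k^2$, each modulated only by a factor $1+O(1/k)$. At $k=re^{i\theta}$ these modes have growth rates $\mp(1+B)\sin\theta$ and $\mp(1-B)\sin\theta$ respectively. Because $B>0$ forces $1+B>|1-B|$, in every direction with $\sin\theta\neq0$ exactly one of $e^{\pm i(1+B)k}$ attains the largest positive rate $(1+B)|\sin\theta|$, which outstrips both the subdominant $(1-B)$-modes and the polynomial prefactor $k^2$ as $r\to\infty$. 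Hence $h_{D_0}(\theta)=(1+B)|\sin\theta|$ for all $\theta\neq0,\pi$, and by the continuity (trigonometric convexity) of the indicator the same formula extends to the real directions, where it gives $h_{D_0}=0$.

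It then remains to read off $\bar I$ from $h_{D_0}(\theta)=(1+B)|\sin\theta|$. The conjugate of the dominant exponent $+i(1+B)$ is the point $-(1+B)i$ and that of $-i(1+B)$ is $(1+B)i$, while the subdominant exponents $\pm i(1-B)$ conjugate to the interior points $\mp(1-B)i$; the convex hull of these four collinear points is precisely the segment $[-(1+B)i,(1+B)i]$, whose support function is indeed $(1+B)|\sin\theta|$. As an internal check, a vertical indicator segment of length $2(1+B)$ forces the zeros onto a strip about $\mathbb{R}$ with axial density $(1+B)/\pi$ in each of the sectors $(-\epsilon,\epsilon)$ and $(\pi-\epsilon,\pi+\epsilon)$, reproducing~(\ref{113}).

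The main obstacle is the uniform discarding of the lower-order terms near the real axis: the expansion~(\ref{119}) is valid only \emph{away from} $\mathbb{R}$, and there the dominance margin $[(1+B)-|1-B|]\,|\sin\theta|$ shrinks to $0$ as $\theta\to0,\pi$, so the polynomial factor $k^2$ in front of the $(1-B)$-modes competes with the exponential separation. I resolve this not by evaluating on $\mathbb{R}$ directly but by taking the ray limit $r\to\infty$ at each fixed $\theta\neq0,\pi$ (where exponential growth always defeats the polynomial) and then invoking continuity of $h_{D_0}$ to fill in $\theta=0,\pi$. A secondary point is that the endpoints of $\bar I$ are governed by the coefficient $p(0)$ of the $(1+B)$-modes; should $p(0)$ happen to vanish, the endpoints would be supplied by the first non-vanishing coefficient at exponential type $1+B$, but Theorem~\ref{11} already certifies the type to be exactly $1+B$, pinning the endpoints at $\pm(1+B)i$.
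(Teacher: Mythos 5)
Your main line of argument is correct, but it takes a genuinely different route from the paper. You work directly from the exponential-sum asymptotics~(\ref{119}): you isolate the four modes $e^{\pm i(1+B)k}$, $e^{\pm i(1-B)k}$, observe that along every ray $\theta\neq 0,\pi$ the $(1+B)$-modes (carried by the constant $p(0)$) exponentially dominate the $(1-B)$-modes despite their polynomial prefactor $k^2$, conclude $h_{D_0}(\theta)=(1+B)|\sin\theta|$ off the real directions, fill in $\theta=0,\pi$ by continuity of the indicator, and then read the diagram off as the convex hull of the conjugated exponents. The paper instead never touches~(\ref{119}) in this proof: it factors $D_0$ as in~(\ref{DDD}) into $\frac{\sin k}{k}\cdot y(1;k)\cdot\{\mathrm{bracket}\}$, argues the bracket has only polynomial growth away from $\mathbb{R}$ so contributes nothing to the indicator, adds the indicators $|\sin\theta|$ and $B|\sin\theta|$ of the two explicit factors via an asymptotic identity of Levin, disposes of the degenerate case $\{\mathrm{bracket}\}\equiv 0$ by citing Aktosun et al.\ (which would force $n\equiv1$), and finally passes from the indicator to the diagram by Cartwright theory. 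Your approach buys transparency about \emph{which} coefficient produces the endpoints (namely $p(0)$), at the price of having to treat $p(0)=0$ separately; the paper's approach never singles out $p(0)$, at the price of its own degenerate case and a looser "$O(k)$-growth" claim for the bracket.

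One point in your fallback deserves a warning: when $p(0)=0$, knowing from Theorem~\ref{11} that the \emph{type} equals $1+B$ does not by itself pin the endpoints at $\pm(1+B)i$ — a convex diagram of a type-$(1+B)$ function merely touches the circle of radius $1+B$ somewhere. To place that contact point on the imaginary axis you need the diagram to be a vertical segment symmetric about the origin, which follows from $D_0$ being bounded on $\mathbb{R}$ (it is $O(1/k)$ there by~(\ref{1.15})) together with its evenness and reality on the real axis — i.e., precisely the Cartwright-class argument (Levin, p.~251) that the paper invokes as its final step. So in the degenerate case your proof must import the same Cartwright ingredient the paper uses; with that added, the argument closes.
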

\begin{proof}
The proof is straight forward from~(\ref{1.7}).
\begin{eqnarray}\label{DDD}
D_{0}(k)&=&\frac{\sin k}{k}y(1;k)\{\frac{y'(1;k)}{y(1;k)}-k\frac{\cos k}{\sin k}\}\\
&=&\frac{\sin k}{k}y(1;k)\{k\frac{\cos kB[1+O(\frac{1}{k})]}{\sin k B[1+O(\frac{1}{k})]}-k\frac{\cos k}{\sin k}\},\nonumber
\end{eqnarray}
away from the real axis. The bracket term is only of $O(k)$-growth away from the real axis. In this case, we may compute the Lindel\"{o}f's indicator function \cite[p.\,70]{Levin}, and  deduce that$$h_{D_{0}}(\theta)=h_{\frac{\sin k}{k}}(\theta)+h_{y(1;k)}(\theta),\,\theta\neq0,$$
in which we applied an asymptotic identity from \cite[p.\,159]{Levin}. Moreover, we note that $h_{\frac{\sin k}{k}}(\theta)=|\sin\theta|$ and $h_{y(1;k)}(\theta)=|B||\sin \theta|$, by considering~(\ref{1.13}), and~(\ref{1.14}). Hence, we deduce that
\begin{equation}\label{DD}
h_{D_0}(\theta)=(1+B)|\sin\theta|.
\end{equation}
In the case that $\frac{y'(1;k)}{y(1;k)}-k\frac{\cos k}{\sin k}\equiv0$ in~(\ref{DDD}), we deduce that $$D_{0}(k)\equiv0,$$
which implies that $n\equiv1$ from the result of \cite[Theorem\,3.1]{Aktosun}. That is, the eigenfunction pair $(w,v)$ is trivial. We refer more details to \cite{Chen,Chen3}.
Considering~(\ref{DD}), we prove the lemma by Cartwright theory \cite[p.\,251]{Levin}. 

\end{proof}

\begin{lemma}\label{222}
All interior transmission eigenvalues are located in a suitable strip along the real axis.
\end{lemma}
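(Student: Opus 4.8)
The plan is to deduce Lemma~\ref{222} from the single quantitative statement that $D_0(k)$ cannot vanish once $|\Im k|$ is large, which is exactly the ``zeros in a strip'' condition in the definition of a sine-type function. First I would fix a radius $R>0$; since $D_0$ is entire of order $1$ by Theorem~\ref{11}, its zeros are isolated, so only finitely many lie in $|k|\le R$, and those finitely many points sit in some horizontal strip $|\Im k|\le C_0$. It then suffices to find constants $C_1,R$ with $D_0(k)\neq0$ whenever $|k|\ge R$ and $|\Im k|\ge C_1$: the strip $|\Im k|\le\max(C_0,C_1)$ would then capture every eigenvalue produced by $D_0$, and the analogous symbol for $D_l$ (which differs only through the Bessel index) would be treated the same way.

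For the key nonvanishing estimate I would start from the asymptotic representation~(\ref{119}), valid away from the real axis,
\[
ik^3D_0(k)=k^2\bigl[e^{i(1-B)k}-e^{-i(1-B)k}\bigr]\bigl[1+O(\tfrac1k)\bigr]-p(0)\bigl[e^{i(1+B)k}-e^{-i(1+B)k}\bigr]\bigl[1+O(\tfrac1k)\bigr].
\]
Writing $\Im k=y>0$, the four exponentials have moduli $|k|^{2}e^{\pm(1-B)y}$ and $|p(0)|e^{\pm(1+B)y}$, and since $1+B>|1-B|$ the factor $e^{(1+B)y}$ eventually dominates; the natural move is therefore to pull out $p(0)e^{-i(1+B)k}$, write $ik^3D_0(k)=p(0)e^{-i(1+B)k}\bigl[1+(\text{smaller terms})\bigr]$, and argue that the bracket stays bounded away from $0$, forcing $D_0(k)\neq0$. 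Equivalently, in the $\cot$-form~(\ref{DDD}) one would show that the factor $\tfrac{y'(1;k)}{y(1;k)}-k\cot k\approx k(\cot kB-\cot k)$ does not vanish off the real axis.

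The crux of the lemma, and the step on which I expect the whole argument to rest, is precisely this domination claim, because the two frequencies carry coefficients of wildly different size: the outer frequency $1+B$ carries only the constant $p(0)$, whereas the inner frequency $1-B$ carries the polynomial factor $k^{2}$. Hence $p(0)e^{-i(1+B)k}$ overtakes $k^{2}e^{-i(1-B)k}$ only once $e^{2By}\gtrsim|k|^{2}$, that is once $|\Im k|\gtrsim\frac1B\log|\Re k|$; in the intermediate range $C_1\le|\Im k|\lesssim\frac1B\log|\Re k|$ no single exponential dominates, and in~(\ref{DDD}) the leading parts of $k\cot kB$ and $k\cot k$ both behave like $\mp ik$ and cancel. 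I would therefore have to exclude zeros in this intermediate region by a refined two-term Rouch\'e comparison of $ik^3D_0$ against the model symbol $k^{2}[e^{i(1-B)k}-e^{-i(1-B)k}]-p(0)[e^{i(1+B)k}-e^{-i(1+B)k}]$, controlling the $O(\tfrac1k)$ remainders uniformly in $y$.

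Making that comparison favourable is where essentially all the difficulty lies, and it is not clear to me that it succeeds in full generality: the same Rouch\'e heuristic suggests that the zeros of the model symbol $k^{2}\sin((1-B)k)-p(0)\sin((1+B)k)$ cluster along the logarithmic curves $|\Im k|\sim\frac1B\log|\Re k|$ rather than inside a fixed strip, \emph{unless} $p(0)=0$ (i.e.\ $n''(0)=0$, using $n'(0)=0$ at the centre). Establishing a genuinely bounded strip therefore seems to require either such an extra hypothesis controlling $p(0)$, or some cancellation special to the transmission determinant that prevents the inner $k^{2}$-term and the outer $p(0)$-term from balancing; isolating and justifying that mechanism is the part of the proof I would expect to be hard.
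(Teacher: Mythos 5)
You should know at the outset that the paper contains no proof to compare against: its entire argument for Lemma~\ref{222} is the sentence ``We refer the proof to \cite[Theorem 4.2]{Chen5}'', together with the remark that Theorem 2.5 (the Wilder-type statement embedded in the proof of Lemma~\ref{224}) provides an ``alternative proof''. So the real question is whether the obstruction you isolate is genuine, and it is. The coefficient-degree mismatch in~(\ref{119}) --- degree $2$ on the frequencies $\pm(1-B)$, degree $0$ on the frequencies $\pm(1+B)$ --- is exactly the situation in which the P\'olya--Dickson theory of exponential sums (the very ``Wilder-type'' theory the paper invokes via \cite{Dickson,Dickson2}) places the corresponding chains of zeros in \emph{logarithmic} strips $\bigl|\,|\Im k|-\tfrac1B\log|\Re k|\,\bigr|\le C$ rather than horizontal ones. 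Your Rouch\'e sketch can in fact be completed in the negative direction: in the upper half-plane the dominant part of $ik^3D_0(k)$ is $-k^2e^{-i(1-B)k}\bigl[1-\tfrac{p(0)}{k^2}e^{-2iBk}+O(\tfrac1k)\bigr]$, and the equation $e^{-2iBk}=\tfrac{k^2}{p(0)}\bigl(1+O(\tfrac1k)\bigr)$ has a solution near every point of the balance curve $e^{2B\Im k}=|k|^2/|p(0)|$ at which the phases align, spaced roughly $\pi/B$ apart in $\Re k$. These are genuine zeros of $D_0$, hence transmission eigenvalues, with $|\Im k|\to\infty$. So when $p(0)\neq0$ the lemma is not merely hard to derive from~(\ref{119}); it \emph{contradicts}~(\ref{119}).

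Consequently the difficulty you flag is not a gap in your reasoning but a gap in the paper's support for the statement. Note that the standing hypothesis $n\in\mathcal{C}^2(\mathbb{R}^+)$ with $n\equiv1$ outside $D$ forces $n(1)=1$, $n'(1)=0$, $n''(1)=0$; this smoothly-matched regime is precisely what makes the extreme frequencies $\pm(1+B)$ carry only $O(1)$ coefficients. (A horizontal strip, and the sine-type property needed for Theorem~\ref{13}, would be available if $n(1)\neq1$, but that case is excluded here; compare the distribution results for smoothly matched $n$ in \cite{Colton5}.) The paper's internal ``alternative proof'' cuts the other way as well: by Dickson's theorems the outer strips $R_{11}$, $R_{13}$ of Theorem 2.5, which carry zero density $B/\pi$ each, must be the logarithmic ones, so that theorem is consistent with eigenvalues escaping every horizontal strip rather than confirming Lemma~\ref{222}. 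The only repairs would be an additional hypothesis forcing the $(1+B)$-frequency coefficients to vanish to all orders in $1/k$ (much stronger than your $p(0)=0$, i.e.\ $n''(0)=0$), or a weakening of the lemma to assert only that the eigenvalues lie in a logarithmic neighborhood of the real axis. Your instinct to distrust the statement in full generality, and to locate the failure at the $k^{2}$-versus-$p(0)$ imbalance, is correct.
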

\begin{proof}
We refer the proof to \cite[Theorem 4.2]{Chen5}.
\end{proof}
An alternative proof to Lemma \ref{222} is provided by Theorem 2.5 below.
\begin{lemma}\label{223}
There exist constant $h,\,c,$ and $C$ such that
$$0<c<|D_0(x+ih)|<C<\infty,\,-\infty<x<\infty.$$
\end{lemma}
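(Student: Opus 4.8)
The plan is to read this as the boundedness condition in Levin's definition of a sine-type function and to verify it on a single horizontal line placed high enough above the real axis. First I would invoke Lemma~\ref{222} to fix a height $h>0$ so large that the line $\Im k=h$ lies strictly above the strip containing every interior transmission eigenvalue; in particular $D_0$ then has no zero on or above this line, and the asymptotic expansions~(\ref{1.13})--(\ref{1.15}) and~(\ref{119}) are valid there since we stay bounded away from the real axis (so the $\cot kB$ and $\tan kB$ poles in~(\ref{116})--(\ref{117}) cause no trouble). The advantage of restricting to $\Im k=h$ is that each exponential $e^{\pm i(1\pm B)k}$ appearing in~(\ref{119}) has modulus depending only on $h$, not on $x=\Re k$; the whole problem therefore reduces to showing that a fixed combination of such exponentials, corrected by the $O(1/k)$ remainders, stays between two positive constants as $x$ ranges over $\mathbb{R}$.

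For the upper bound I would argue by exponential type. By Lemma~\ref{221}, $D_0$ is of exponential type $1+B$ with indicator $h_{D_0}(\theta)=(1+B)|\sin\theta|$, so along any horizontal line the indicator predicts growth no faster than $e^{(1+B)h}$. To convert this into a uniform pointwise estimate I would combine the boundedness of $D_0$ on the real axis, read off from~(\ref{1.15}) where every term carries a factor $O(1/k)$, with a Phragm\'en--Lindel\"of argument in the strip $0\le\Im k\le h$, yielding $|D_0(x+ih)|\le C$ for all $x$. Equivalently one may bound the right-hand side of~(\ref{119}) term by term and divide by $|k|^{3}$.

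The lower bound is the essential point and the main obstacle. Here I would factor out of~(\ref{119}) the exponential of largest modulus on the line $\Im k=h$ and try to show that the surviving bracket is bounded away from $0$ uniformly in $x$. The difficulty is genuine: two distinct frequencies, $1-B$ and $1+B$, enter with coefficients of different polynomial degree in $k$, so the modulus of the sum oscillates with $x$ and one must rule out near-cancellations at \emph{every} $x$, not merely away from the eigenvalue abscissae. The mechanism I would attempt is a triangle-inequality (diagonal-dominance) estimate, choosing $h$ large enough that the dominant exponential contribution exceeds the sum of the moduli of all subordinate terms by a fixed positive margin, which would force $|D_0(x+ih)|\ge c>0$. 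Carrying this through honestly, keeping careful track of the competing powers of $k$ multiplying the two frequencies in~(\ref{119}), is precisely what separates a sine-type function from an arbitrary entire function of exponential type, and is where the detailed structure of~(\ref{116})--(\ref{119}) must be exploited.

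Combining the two bounds produces constants $c,C$ with $0<c<|D_0(x+ih)|<C<\infty$ for all real $x$ on the chosen line. Together with Lemma~\ref{221} and Lemma~\ref{222}, this completes the checklist for $D_0$ to be a sine-type function in the sense of \cite[p.\,163]{Levin2}, which is the property subsequently fed into the Riesz-basis construction of Theorem~\ref{13}.
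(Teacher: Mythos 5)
Your frame---push the line $\Im k=h$ above the eigenvalue strip of Lemma~\ref{222} so that $D_0$ has no zeros there and the expansions~(\ref{1.15})--(\ref{119}) apply, then extract the two bounds---is exactly the paper's own proof, which consists of precisely those two remarks: choose $|h|$ large to avoid zeros, and assert horizontal boundedness from~(\ref{1.7}),~(\ref{1.13}),~(\ref{1.14}). You are also right that the uniform positive lower bound is the substantive claim, and that it is the part the paper never actually establishes. But the route you propose to it cannot be completed, for a reason you yourself record while proving the upper bound: by~(\ref{1.15}) \emph{every} term of $D_0$ carries a factor $1/k$, while on a fixed horizontal line the moduli of the exponentials $e^{\pm i(1\pm B)k}$ depend only on $h$. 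Consequently $|D_0(x+ih)|=O\!\left(e^{(1+B)h}/|x|\right)\rightarrow 0$ as $|x|\rightarrow\infty$ along \emph{every} horizontal line, so no constants $h$ and $c>0$ with $c<|D_0(x+ih)|$ for all real $x$ exist: the statement is false as written for $D_0$ itself, and has a chance only after renormalization, i.e.\ for $kD_0(k)$ (equivalently, for $k^{3}D_0(k)$ in~(\ref{119}) with the extraneous power of $k$ removed). A blind proof attempt ought to surface this obstruction rather than absorb it into the ``remainders.''

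Even for the renormalized function $kD_0(k)$, your specific mechanism---take $h$ so large that the dominant exponential exceeds the sum of the moduli of the subordinate terms uniformly in $x$---goes in the wrong direction. On $\Im k=h$ the leading term is of size $e^{|1-B|h}$ (it comes from $\sin((1-B)k)$ with coefficient $2+O(1/k)$), whereas the $(1+B)$-frequency correction has, by~(\ref{116})--(\ref{117}), coefficient $-p(0)/(2k^2)+O(e^{-2Bh}/k)+O(1/k^{3})$ (note $\tan kB+\cot kB=2/\sin 2kB=O(e^{-2Bh})$ on the line), hence modulus of order $|p(0)|\,e^{(1+B)h}/|k|^{2}$. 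Near $x=0$, where $|k|\asymp h$, this is of order $|p(0)|\,e^{(1+B)h}/h^{2}$, which exceeds $e^{|1-B|h}$ by the factor $e^{2\min(1,B)h}/h^{2}\rightarrow\infty$: enlarging $h$ destroys dominance rather than creating it. Dominance of the $(1-B)$ term is a large-$|x|$ phenomenon at fixed $h$, since all corrections carry extra powers of $1/|k|$. The workable completion is therefore: fix $h$ as in your first step; for $|x|\ge R(h)$ (roughly $e^{\min(1,B)h}$) obtain the two-sided bound from your triangle inequality; and on the remaining compact segment $|x|\le R(h)$ get the positive lower bound for free, because $kD_0$ is continuous and zero-free on the line. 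Without the renormalization by $k$ and without this split into $|x|$ large and $|x|$ bounded, the key step of your proposal fails.
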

\begin{proof}
In this case, we let $|h|$ be large so that $D_0(x+ih)$ has no zero on $|\Im k|= h $. $D_0(k)$ is horizontally bounded by~(\ref{1.7}),~(\ref{1.13}), and~(\ref{1.14}).

\end{proof}
In general, we do not have the second condition required in \cite[p.\,163]{Levin2}. Thus, we need the following lemma.
\begin{lemma}\label{224}
There are  simple interior transmission eigenvalues of~(\ref{1.1}) that satisfy the separation condition: $\inf_{j\neq j'}|k_j-k_{j'}|=2\delta>0$, with density $\frac{2(1+B)}{\pi}$ in $\mathbb{C}$.
\end{lemma}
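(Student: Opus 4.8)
The plan is to read the separated subsequence directly off the asymptotic normal form~(\ref{119}) and off the angular densities recorded in Theorem~\ref{11}. Introduce the comparison function
$$F(k):=k^{2}[e^{i(1-B)k}-e^{-i(1-B)k}]-p(0)[e^{i(1+B)k}-e^{-i(1+B)k}],$$
so that~(\ref{119}) reads $ik^{3}D_{0}(k)=F(k)[1+O(1/k)]$ uniformly on the horizontal lines furnished by Lemma~\ref{223}. The zeros of $F$ are the intersection points of the two asymptotically periodic families $k^{2}\sin(1-B)k$ and $p(0)\sin(1+B)k$; by a Wilder-type theorem (see \cite{Chen,Chen3,Chen5}) they fall, for $|k|$ large, into finitely many chains along each of which the consecutive spacing tends to a positive constant. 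First I would fix $R_{0}$ and a tube width $h$ as in Lemma~\ref{223}, so that on $\{|\Im k|=h\}$ the modulus $|D_{0}|$ is bounded above and below; by Lemma~\ref{222} all but finitely many eigenvalues lie inside this tube.

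Next I would transfer the count from $F$ to $D_{0}$. Since $ik^{3}D_{0}=F[1+O(1/k)]$, Rouch\'{e}'s theorem on the boundary of each small disc centered at a zero of $F$ shows that, for $|k|\ge R_{0}$, every such disc carries exactly one zero of $D_{0}$, and conversely. Simplicity for large $|k|$ then follows from Hurwitz's theorem once I verify that $F'$ does not vanish at the zeros of $F$: at such a point the dominant balance forces the derivative of the leading term to be nonzero, so the nearby zero of $D_{0}$ inherits multiplicity one. The finitely many eigenvalues with $|k|<R_{0}$, together with any exceptional multiple zero, are simply discarded; being finite in number they change neither the separation constant nor the density. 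The density itself is then immediate from Theorem~\ref{11}: by~(\ref{113}) the sectors $(-\epsilon,\epsilon)$ and $(\pi-\epsilon,\pi+\epsilon)$ each carry eigenvalues of angular density $\frac{1+B}{\pi}$, while by~(\ref{114}) the complementary sectors are asymptotically empty, so the two contributions sum to the planar density $\frac{2(1+B)}{\pi}$ claimed in $\mathbb{C}$.

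It remains to enforce the uniform separation $\inf_{j\neq j'}|k_{j}-k_{j'}|=2\delta>0$, and this is the one genuinely delicate point. Although each chain is internally separated, the frequencies $1-B$ and $1+B$ are in general incommensurable, so zeros belonging to different chains can approach one another arbitrarily closely; the raw infimum over the full zero set may therefore be $0$. This is exactly the failure of the second sine-type condition of \cite[p.\,163]{Levin2} flagged before the lemma. I would resolve it by a thinning argument: choose $\delta$ with $4\delta(1+B)<\pi$ (so that $2\delta$ is below the mean spacing $\frac{\pi}{2(1+B)}$), cover the tube by the lattice of discs of radius $\delta$, and retain a single representative from each cluster of mutually $2\delta$-close eigenvalues. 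The retained set is $2\delta$-separated by construction and, by the previous paragraph, consists of simple eigenvalues.

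The hard part, and the step on which I would spend the most care, is showing that this thinning does \emph{not} depress the density below $\frac{2(1+B)}{\pi}$: one must prove that the counting function of the discarded near-coincidences is $o(r)$, so that the retained counting function still satisfies $n(r)\sim\frac{2(1+B)}{\pi}r$. I would extract this from the Wilder-type description, which pins each zero of $F$ to a prescribed curve with controlled spacing; counting directly how often two such curves come within $2\delta$ over $|k|\le r$ should give a contribution of lower order in $r$, whence the density is preserved. Here the arithmetic of $1\pm B$ is the crux, since a careless estimate would only bound the collisions by $O(r)$ rather than $o(r)$. Combining the resulting simple, $2\delta$-separated, full-density subsequence with the indicator of Lemma~\ref{221} and the strip bound of Lemma~\ref{223} then supplies precisely the four properties of a sine-type function required in \cite[p.\,163]{Levin2}, which is what the subsequent Riesz-basis argument for Theorem~\ref{13} will invoke.
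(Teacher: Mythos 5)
Your proposal follows the paper's skeleton almost step for step up to the last stage: the comparison function $F$, the Rouch\'{e} transfer of zeros from $F$ to $ik^{3}D_{0}$ via~(\ref{119}), simplicity from the one-zero-per-disc count, and the planar density $\frac{2(1+B)}{\pi}$ read off from~(\ref{113})--(\ref{114}) are all exactly the paper's ingredients. The divergence --- and the genuine gap --- is the separation step. You reduce it to the claim that the number of $2\delta$-near pairs of zeros with modulus at most $r$ is $o(r)$, and you propose to extract this from ``the arithmetic of $1\pm B$.'' That claim is never proven in your write-up, and the mechanism you name cannot prove it: if two chains of zeros really did run parallel to the real axis with incommensurable spacings $\frac{\pi}{1-B}$ and $\frac{\pi}{1+B}$, then Weyl equidistribution makes the number of pairs within $2\delta$ up to modulus $r$ grow like $c\,\delta\,r$, i.e.\ linearly in $r$, not $o(r)$. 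Your thinning would then delete a fixed positive proportion of zeros, depressing the density to $\frac{2(1+B)}{\pi}-O(\delta)$ and destroying the exact density the lemma asserts; shrinking $\delta$ restores the density only at the cost of the separation constant. So the route you indicate for closing the ``hard part'' does not close it.

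What actually saves the lemma --- and what the paper's proof uses through the Wilder-type theorem it embeds (Theorem 2.5, i.e.\ the strip counts~(\ref{2.2})--(\ref{2.4}))  --- is geometric rather than arithmetic. Because the coefficient of $e^{\pm i(1-B)k}$ in $F$ is $k^{2}$ while that of $e^{\pm i(1+B)k}$ is the constant $p(0)$, the Dickson--Wilder description places the two outer chains along curves with $\Im k\approx\pm\frac{1}{B}\ln|k|$, receding from the real axis, while the middle chain stays in a bounded horizontal strip; the three strips $R_{11},R_{12},R_{13}$ therefore diverge from one another. Inter-chain near-coincidences are consequently \emph{finite} in number, and the intra-chain spacings tend to $\frac{\pi}{B}$ and $\frac{\pi}{|1-B|}$ respectively, so after discarding finitely many zeros the entire zero set is already uniformly separated: no thinning, hence no density loss, is needed at all. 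This is precisely the content of the paper's ``exactly one zero in each square'' step, which yields simplicity, separation, and the full density $\frac{2(1+B)}{\pi}$ simultaneously. In short, you correctly isolated where the difficulty sits, but the $o(r)$ collision bound is left unproven, and the direction you suggest for proving it is the one that fails; the cure is in the very Wilder-type statement you cite, namely the logarithmic drift of the outer strips forced by the $k^{2}$ coefficient.
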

\begin{proof}
The zero density of $D_0(k)$ is $\frac{2(1+B)}{\pi}$ by~(\ref{113}) and~(\ref{114}). Now we divide $D_0(k)$ by finitely many zeros, and deduce  again an entire function of order $1$ and of type $1+B$ as follows: The Wilder's type of theorem \cite{Chen3,Chen5,Dickson,Dickson2} combined with~(\ref{119}) implies the following result.
\begin{theorem}
Let $R_{11}$, $R_{12}$, and $R_{13}$ be three adjacent strips containing all but a finite exception of the zeros of $D_0(k)$ along the positive real axis, and
$$N(R_{1i}(\beta,s,K)), \,i=1,2,3,$$  be the zero counting function inside the strip starting with $\Im k=\beta$, of length $s$, and of width $K$.
There exist some $K>0$ and large $\beta$ such that
\begin{eqnarray}
&&N(R_{11}(\beta,s,K))\sim \left\{%
\begin{array}{ll}
    s\frac{B}{\pi}, & 1\geq B; \vspace{5pt}\\
    s\frac{1}{\pi}, &1<B; \label{2.2} \\
\end{array}%
\right.    \vspace{5pt}\\
&&N(R_{12}(\beta,s,K))\sim s\frac{|1-B|}{\pi};\vspace{5pt}\label{2.3} \\
&&N(R_{13}(\beta,s,K))\sim \left\{%
\begin{array}{ll}
    s\frac{B}{\pi}, & 1\geq B; \vspace{5pt}\\
    s\frac{1}{\pi}, &1<B. \\
\end{array}%
\right.\label{2.4}
\end{eqnarray}
\end{theorem}
The estimate~(\ref{119}) implies~(\ref{2.2}),~(\ref{2.3}), and~(\ref{2.4}). We refer the detailed proof to \cite[Sec.\,4]{Chen3}. For our application here, we take $\beta$ large and suitable $s$ for each~(\ref{2.2}),~(\ref{2.3}), and~(\ref{2.4}) such that there is exactly one zero in each square. Hence, we have an entire function with all simple zeros disregarding the finitely many ones near the origin. This proves the lemma.

\end{proof}
\begin{proof}
Now Lemma \ref{221}, Lemma~\ref{222}, Lemma~\ref{223}, and Lemma~\ref{224} imply that $D_0(k)$ is a sine-typed entire function of exponential type with the indicator diagram $[-(1+B)i,(1+B)i]$. Let $\{k_j\}$ be the collection of all interior transmission eigenvalues of~(\ref{1.1}). Then the system $\{e^{ik_jr}\}$ is a Riesz basis in $L^2(-(1+B),(1+B))$ by \cite[p.\,170]{Levin2}, and thus in $L^2(U)$,  due to the radial symmetry assumption.

\end{proof}
\begin{acknowledgement}
The author wants to thank Prof. Chao-Mei Tu at NTNU for proofreading an earlier version of this manuscript.
\end{acknowledgement}

\end{document}